\documentclass{amsart}
\usepackage{amsfonts}
\usepackage{latexsym}
\usepackage{amssymb}
\usepackage{amsmath}


\newcommand{\R}{\mathbb R}


\newtheorem{thm}{Theorem}[section]
\newtheorem{cor}{Corollary}[section]

\newtheorem{proposition}{Proposition}[section]
\theoremstyle{remark}
\newtheorem*{rmk}{Remark}


\begin{document}


\bigskip
\title{Factoring Sobolev inequalities through classes of functions}

\author[D.\,Alonso]{David Alonso-Guti\'errez}
\address{Departamento de Matem\'aticas, Universidad de
Zaragoza, 50009 Zaragoza, Spain}\email[(David
Alonso)]{daalonso@unizar.es}
\thanks{ The three  authors partially supported by MCYT
Grant(Spain) MTM2007-61446 and DGA E-64}

\author[J.\,Bastero]{Jes\'us Bastero}\email[(Jes\'us
Bastero)]{bastero@unizar.es}

\author[J.\,Bernu\'es]{Julio Bernu\'es}
\email[(Julio Bernu\'es)]{bernues@unizar.es}
 \subjclass[2000]{46E35, 46E30, 26D10, 52A40}
\keywords{Sobolev inequality, sharp constants, affine isoperimetric inequalities}

\begin{abstract} We recall two approaches to
recent improvements of the classical Sobolev inequality. The first one follows the point of view of Real Analysis, \cite{MM1}, \cite{BMR}, 
while the second one relies on tools from Convex Geometry, \cite{Z}, \cite{LYZI}.  In this paper we prove a (sharp) connection between them. 

\end{abstract}

\date{}

\maketitle

\section{Introduction and notation}

 The classical Sobolev inequality states that for $1\le p<n$ and $\frac1q=\frac1p-\frac1n$, there exists a constant
$C_{p,n}>0$ such that for every  $f\colon\R^n\to\R$ in the Sobolev space $W^{1,p}(\R^n)$,
\begin{equation}\label{Sobolev}
 \Vert\nabla f\Vert_p\ge C_{p,n} \Vert f\Vert_q
\end{equation}
where $\Vert \cdot\Vert_q$ denotes the $L_q-$norm of the Euclidean norm of
functions and $\nabla f$ is the gradient of $f$.

The best constant in the case $p=1$ ($q=n/(n-1)$) was
obtained by  H. Federer, W. Fleming, \cite{FF} and independently by V. Maz'ja, \cite{M1} \cite{M2}. They proved 
$C_{1,n}=n\ \omega_n^{\frac{1}{n}}$, where $\omega_n$  denotes the Lebesgue measure of the
Euclidean unit ball in ${\mathbb R}^n$, and showed that this fact is equivalent to the isoperimetric
 inequality (see for instance \cite{C} for a survey). 
For the other values of $1<p<n$, Aubin and Talenti got the best constants, \cite{A}, \cite{T1}. 
See also the recent approaches in \cite{BL} and \cite{CNV}.

We point out that one key step in classical proofs of (\ref{Sobolev}) is the use of
Polya-Szeg\"o rearrangement inequality, see \cite{PS},
\begin{equation}\label{PS}
\Vert \nabla f\Vert_p \ge \Vert \nabla f^\circ\Vert_p,\qquad \qquad p\ge 1
\end{equation}
where $f^{\circ}(x):=f^{*}(\omega_n |x|^n)$, is a radial extension to ${\mathbb R}^n$  of the decreasing
rearrangement of $f$, $f^{*}(t)=\inf\{\lambda>0\ ;\ |\{|f|>\lambda\}|_n\le t\}, t\ge 0$,  $|\cdot |$ is the 
Euclidean distance in ${\mathbb R}^n$ and $|\cdot|_n$ is the Lebesgue measure on the (suitable) $n$-dimensional
space. $f^{\circ}$ has the same distribution function as $f$ and $f^{\ast}$. It 
is called the symmetric Schwarz rearrangement of $f$.

For $p=n$, the inequality with $q=\infty$ is not true. 
In the sixties Trudinger \cite{Tr} and Moser \cite{M} proposed an Orlicz space, $\mathcal{MT}(\Omega)$, 
of functions defined on open domains $\Omega\subset {\mathbb R}^n$ with $|\Omega|_n<\infty$ and showed the continuous
inclusion $W^{1,n}_0(\Omega)\hookrightarrow\mathcal{MT}(\Omega)$, where $W^{1,n}_0(\Omega)$ is the closure of the space of 
 $\mathcal{C}^1$ functions of compact support,
$\mathcal{C}^1_{00}(\Omega)$, in the Sobolev space $W^{1,n}(\Omega)$. More precisely, they proved that 
there exists $C_n>0$ such that for all $f\in W^{1,n}_0(\Omega)$
\[
\Vert \nabla f\Vert_n \geq C_n \Vert f\Vert_{\mathcal{MT}}
\]
and the constant (depending on $|\Omega|_n$) is sharp.

In the late seventies, Hansson \cite{Ha} and Brezis-Wainger \cite{BW} improved the target space in the inclusion above.
They introduced a rearrangement invariant function space, $H_n(\Omega)$,
such that $W_0^{1,n}(\Omega)\hookrightarrow H_n(\Omega)\hookrightarrow\mathcal{MT}(\Omega)$. 
Moreover, $H_n(\Omega)$ was proved to be the optimal target space in the class of rearrangement invariant spaces. 
Equivalently, they obtained an inequality of the form
$\Vert \nabla f\Vert_n\geq c_n \Vert f\Vert_{H_n}\geq c'_n \Vert f\Vert_{\mathcal{MT}}$ 
for some constants $c_n, c'_n>0$ (depending on $|\Omega|_n$). 

Tartar \cite{Tar}, Maly-Pick \cite{MP} and Bastero-Milman-Ruiz \cite{BMR}, see also \cite{K},
refined those estimates using {\sl classes of functions} as follows: For $1\le
p<\infty$ denote
 \[\mathcal{A}_{\infty,p}(\R^n)=\{f; \Vert f\Vert_{\infty,
p}=\left(\int_0^{\infty}
(f^{\ast\ast}(t)-f^\ast(t))^p\frac{dt}{t^{p/n}}\right)^{1/p}<\infty\}\]
where
 $f^{\ast\ast}$ is the Hardy transform of  $f^{*}$ defined by
$\displaystyle f^{\ast\ast}(t)=\frac{1}{t}\int_0^tf^\ast(s)\ ds.$$W^{1,p}(\R^n)$
Then for all $f\in W^{1,n}_0(\Omega)$ 
\begin{equation}\label{BMR} \Vert \nabla f\Vert_n\geq
 (n-1)\,\omega_n^{\frac1n}\Vert f\Vert_{\infty,n}\geq c''_n \Vert
f\Vert_{H_n}
\end{equation}
for some $c''_n>0$ (depending on $|\Omega|_n$). Observe that the constant in the first inequality 
 depends neither on the measure of $\Omega$ nor on the support of $f$.

Once one considered classes of functions instead of vector spaces, Sobolev type
inequalites could be extended further. At this point we recall the well known 
fact that $W_0^{1,p}(\R^n)=W^{1,p}(\R^n)$, \cite{M2}.  In \cite{MM1} the authors proved
\begin{equation}
\label{des}\Vert \nabla f\Vert_p\geq c_{n,p}
 \Vert f\Vert_{\infty,p}\geq c'_{n,p} \Vert
f\Vert_q,\qquad  \forall\ f\in W^{1,p}(\R^n),\qquad 1\le p<n
\end{equation}
and some constants $ c_{n,p}, c'_{n,p}>0$. 

We now move on to a different philosophy. We start by recalling the so
called Petty projection inequality, stated in \cite{P} for
convex bodies and extended by Zhang \cite {Z} to compact subsets $K\subset{\mathbb R}^n$ 
\begin{equation}\label{Petty}
 \frac{n\omega_n}{\omega_{n-1}}\left(\int_{
S^{n-1}}
\frac{du}{|P_{u^{\perp}}(K)|_{n-1}^n}
\right)^{-\frac{1}{n}}\ge n\omega_n^{1/n}
 |K|_n^{\frac{n-1}{n}}
\end{equation}
 where $P_{u^{\perp}}$ is the the orthogonal projection onto the hyperplane
$u^{\perp}$ and $du$ is the normalized Haar
probability on the unit sphere $S^{n-1}$. Petty projection inequality directly implies the isoperimetric inequality.

In 1999 Zhang \cite {Z} (see also \cite{HSX} and the references therein)
introduced a new class of functions
\[\mathcal{E}_p(\R^n)=\left\{f\in W^{1,p}(\R^n);
\mathcal{E}_p(f):= \frac1{I_p}\left(
\int_{S^{n-1}}\Vert D_u f\Vert_p^{-n}du\right)^{-\frac{1}{n}}<\infty\right\}, \qquad p\ge 1
\]
where $D_uf(x):=\langle\nabla f(x), u\rangle $ and $\displaystyle
I^{p}_p:=\int_{S^{n-1}}|u_1|^pdu$ is a normalization constant so that $\mathcal{E}_p(f^{\circ})= \Vert \nabla
f^{\circ}\Vert_p$. The expression $\mathcal{E}_p(f)$ is an energy integral having applications in
information theory. It is invariant under transformations of $\R^n$ of the form $x\to x_0+Ax, x_0\in\R^n, A\in SL(n)$, 
\cite{LYZII}.
Moreover, by Jensen's inequality and Fubini's theorem the following relation holds
$$\mathcal{E}_p(f)=\frac{1}{I_p}
\left(\int_{S^{n-1}}\Vert D_uf\Vert_p^{-n}du\right)^{-1/n}
\le \frac{1}{I_p}\left(\int_{S^{n-1}}\Vert D_uf\Vert_p^{p}du\right)^{1/p} =\Vert \nabla f\Vert_p.$$
 
The following remarkable inequality
\begin{equation}\label{LYZ}
\mathcal{E}_p(f) \geq
\mathcal{E}_p(f^\circ)\ ,\qquad  1\le p<\infty
\end{equation}
was proved in a series of papers: Zhang \cite {Z} initiated the approach 
by showing that his extension of the Petty projection 
inequality (\ref{Petty}) is actually equivalent to (\ref{LYZ}) for $p=1$. The general case was proved via the
$L_p$-Brunn-Minkowski theory in \cite{LYZI}, \cite{LYZII}, \cite{CLYZ}. The invariance of $\mathcal{E}_p(f)$ implies, by homogeneity, that 
(\ref{LYZ}) is affine-invariante i.e. invariant under transformations of $\R^n$ of the form $x\to x_0+Ax, x_0\in\R^n, A\in GL(n)$.

The inequality (\ref{LYZ}) is stronger than Polya-Szeg\"o
rearrangement inequality (\ref{PS}) and thus 
it yields to a new proof of Sobolev's inequality  
\begin{equation}\label{sobolev}
 \Vert\nabla f\Vert_p\ge \mathcal{E}_p(f) \geq
\mathcal{E}_p(f^\circ) \ge C_{n,p}\Vert f\Vert_{q}, \qquad 1\le
p<n.
\end{equation}
See \cite {Z}, \cite{LYZII} for such a proof of (\ref{Sobolev}) with sharp constants. 
We remark the fact that $\mathcal{E}_p(f) \geq C_{n,p}\Vert f\Vert_{q}$ 
is affine-invariant while Sobolev's inequality $\Vert\nabla f\Vert_p\ge C_{n,p}\Vert f\Vert_{q}$ is 
invariant only under rigid motions. 

In  \cite{HS1} \cite{HS2}, \cite{HSX}, the authors
investigated the space $\mathcal{E}_p^+(\R^n)$ defined analogously as before by
\[
 \mathcal{E}_p^+(f):=\frac{2^{1/p}}{I_p} \left(
\int_{S^{n-1}}\Vert D_u^+ f\Vert_p^{-n}du\right)^{-\frac{1}{n}}
\]
where $D_u^+f(x):=\max\{\langle\nabla f(x), u\rangle,0 \} $ and proved
\begin{equation}\label{HS}
 \mathcal{E}_p(f)\geq \mathcal{E}_p^+(f) \geq
\mathcal{E}_p^+(f^\circ)=\mathcal{E}_p(f^\circ), \qquad p\ge 1
\end{equation}
which refined (\ref{sobolev}) for $1\le
p<n$.

In the case $p\ge n$, affine-invariant inequalities of
Sobolev type were studied in \cite{CLYZ}, \cite{HS2} and \cite{HSX} with the
hypothesis of $f$ having support of finite measure. In the limiting case $p=n$ they proved the sharp inequality
\begin{equation}\label{hs}
 \Vert \nabla f\Vert_n
\geq \,\mathcal
{E}_n(f) \geq \mathcal
{E}_n^+(f) \geq C_n \Vert f\Vert_{\mathcal{MT}}
\end{equation}
while for $p>n$,
\begin{equation}\label{hsp}
 \mathcal
{E}_p(f) \geq\mathcal
{E}_p^+(f)  \geq \left(\frac{p'}{|q|}\right)^{\frac{1}{p'}} n\omega_n^{1/n}\Vert f\Vert_\infty |{\text {supp}\,f}|_n^{1/q}
\qquad \text{where}\quad \frac 1p +\frac 1{p^\prime}=1
\end{equation} where the constant {\sl depending on the size of the support of $f$} is sharp 
(take $f^{\ast}(t)=(1-t^{-p'/q})\chi_{[0,1]}\!(t)$).

In conclusion, the first approach so described looked for improvements of the right hand side of (\ref{Sobolev}), while
the second approach concerned the left hand side of (\ref{Sobolev}). In this note we link these two approaches
and show that for all $1\le p <\infty$ and $\displaystyle \frac
1q:=\frac1p-\frac1n$
\[\mathcal{E}_p^+(f)
\geq\left(1-\frac 1q\right){n\
\omega_n^{1/n}  }  \Vert
f\Vert_{{\infty,p}}\qquad \forall f\in W^{1,p}(\R^n)\]
(see Theorem \ref{relation}) where the constant is sharp.
As a consequence, for $1\le p<n$ it gives the right constant in the first inequality in 
(\ref{des}) and enables to connect (\ref{sobolev}), (\ref{HS}) and (\ref{des}). For $p=n$, 
Theorem \ref{relation} and its Corollary \ref{cor} connect (\ref{BMR}) and (\ref{hs}), improving 
the first inequality in (\ref{BMR}). In the case $p>n$ it links the first inequality 
in (\ref{des}) and (\ref{sobolev}), (thanks to (\ref{HS}) and to the fact that 
they are also valid for $p>n$). Moreover, in Proposition \ref{mejora} we see how 
it yields to lower estimates for $\mathcal{E}_p^+(f)$ better than (\ref{hsp}).

In the third section we include a proof of the
inequalities (\ref{LYZ})  and (\ref{HS}) which directly derives from Zhang's extension of Petty projection
inequality, paying the penalty of an extra constant $\frac{I_p}{I_1}$ (which is independent of the dimension $n$).
No use of the $L_p$-Brunn-Minkoswki theory and polytope
approximation appearing in the papers \cite{LYZI}, \cite{LYZII}, \cite{CLYZ} and
\cite{HS2} is made.

\section{The results}

The first result is the correct relation between $\mathcal {E}_p^+(f)$ and $\Vert f\Vert_{\infty,p}$.

\begin{thm}\label{relation} Let $1\le p<\infty$ and $\displaystyle \frac
1q=\frac1p-\frac1n$, $q\in (-\infty, -n)\cup[\displaystyle
\frac n{n-1}, \infty]$. Then
\[\mathcal
{E}_p^+(f)
\geq\left(1-\frac 1q\right){n\
\omega_n^{1/n}  }  \Vert
f\Vert_{{\infty,p}}\qquad \forall f\in W^{1,p}(\R^n).
\]           Moreover the constant is sharp.
\end{thm}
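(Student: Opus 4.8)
The plan is to reduce the inequality to a sharp weighted Hardy inequality on the half-line, and then to produce the extremizers explicitly. First I would observe that replacing $f$ by its Schwarz symmetrization $f^\circ$ leaves the right-hand side unchanged — since $\|\cdot\|_{\infty,p}$ depends on $f$ only through $f^\ast$, so $\|f\|_{\infty,p}=\|f^\circ\|_{\infty,p}$ — while it only decreases the left-hand side, because by the already-quoted inequality (\ref{HS}) one has $\mathcal E_p^+(f)\ge\mathcal E_p^+(f^\circ)=\mathcal E_p(f^\circ)=\|\nabla f^\circ\|_p$. Hence it suffices to establish, for every radial nonincreasing $g=f^\circ\in W^{1,p}(\R^n)$,
\[
\|\nabla g\|_p\ \ge\ \Bigl(1-\tfrac1q\Bigr)\,n\,\omega_n^{1/n}\,\|g\|_{\infty,p},
\]
and no sharpness is lost in this reduction, because for a radial nonincreasing $g$ one in fact has $\mathcal E_p^+(g)=\|\nabla g\|_p$.

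Next I would rewrite both quantities through $\psi:=f^\ast$. Since $g\in W^{1,p}(\R^n)$, the function $\psi$ is locally absolutely continuous on $(0,\infty)$ (a jump or singular part of $\psi$ would destroy the weak differentiability of $g$), so from $g(x)=\psi(\omega_n|x|^n)$, polar coordinates and the substitution $t=\omega_n|x|^n$ give
\[
\|\nabla g\|_p^{\,p}=(n\,\omega_n^{1/n})^{p}\int_0^\infty\bigl(t\,(-\psi'(t))\bigr)^{p}\,t^{-p/n}\,dt .
\]
For the other side, a Fubini interchange yields the identity $\displaystyle f^{\ast\ast}(t)-f^\ast(t)=\frac1t\int_0^t(\psi(s)-\psi(t))\,ds=\frac1t\int_0^t\tau\,(-\psi'(\tau))\,d\tau$, so that, setting $h(\tau):=\tau\,(-\psi'(\tau))\ge0$, the target inequality becomes precisely the assertion that the Hardy averaging operator $h\mapsto\frac1t\int_0^t h$ has norm at most $(1-\tfrac1q)^{-1}$ on $L^p((0,\infty),\,t^{-p/n}\,dt)$.

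This last statement is the classical weighted Hardy inequality $\int_0^\infty(\frac1t\int_0^t h)^{p}t^{\alpha}\,dt\le\bigl(\frac{p}{p-1-\alpha}\bigr)^{p}\int_0^\infty h^{p}t^{\alpha}\,dt$ with $\alpha=-p/n$; its hypothesis $\alpha<p-1$ is equivalent to $p>\frac{n}{n+1}$, hence holds for every $p\ge1$, and, using $\frac1q=\frac1p-\frac1n$, one computes $\frac{p}{p-1-\alpha}=\frac{p}{p-1+p/n}=\bigl(1-\frac1q\bigr)^{-1}$, which is exactly the constant claimed. (When $p=1$ this Hardy inequality is in fact an identity, so the theorem is then an equality for every radial nonincreasing $g$.)

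For the sharpness I would test with the radial nonincreasing functions $g_a=f_a^\circ$ whose rearrangement is $f_a^\ast(t)=\frac{1-t^{a}}{a}\,\chi_{(0,1)}(t)$, letting $a\downarrow-\frac1q$ (for $p=n$ these converge to $f^\ast(t)=(-\log t)\,\chi_{(0,1)}(t)$, the Moser--Trudinger profile). Then $h_a(\tau)=\tau^{a}\chi_{(0,1)}(\tau)$ are the standard near-extremizers of the above Hardy inequality, so the ratio in the theorem tends to $(1-\frac1q)\,n\,\omega_n^{1/n}$, while the condition $a>-\frac1q$ is precisely what guarantees $g_a\in W^{1,p}(\R^n)$. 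I expect the only delicate points to be bookkeeping ones — justifying that $f^\circ\in W^{1,p}(\R^n)$ with $f^\ast$ locally absolutely continuous, and that the polar-coordinate and Fubini manipulations are legitimate (if necessary, by first proving the inequality for smooth compactly supported $f$ and passing to the limit); the substantive observation is simply that, once one symmetrizes, the inequality \emph{is} a Hardy inequality whose sharp constant collapses to the stated value.
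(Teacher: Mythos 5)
Your proposal is correct and follows essentially the same route as the paper: reduce to $f^\circ$ via (\ref{HS}), express both $\mathcal{E}_p^+(f^\circ)$ and $\Vert f\Vert_{\infty,p}$ through $\int_0^\infty s^{(n-1)p/n}|f^{\ast\prime}(s)|^p\,ds$ using the identity $f^{\ast\ast}(t)-f^\ast(t)=\frac1t\int_0^t s|f^{\ast\prime}(s)|\,ds$, and invoke the weighted Hardy inequality with weight $t^{-p/n}$, whose constant $\bigl(1-\frac1q\bigr)^{-1}$ is exactly the one claimed. Your sharpness test functions $\frac{1-t^a}{a}\chi_{(0,1)}$ with $a\downarrow -1/q$ are the same power-type near-extremizers the paper uses (truncations of $t^{-1/q}$, $\log(1/t)$, and $(1-t^{-1/q})\chi_{[0,1]}$ in the three regimes).
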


\begin{proof}
Taking (\ref{HS}) into account it is
enough to see that for any
 $f:\R^n\to \R$ compactly supported $C^{1}$
function 
\[
\mathcal{E}_p^+(f^{\circ})=\frac{2^{1/p}}{I_p}  \left(\int_{S^{n-1}}\Vert D_u^+
f^\circ\Vert_p^{-n}du \right)^{-1/n}\geq \left(1-\frac 1q\right){n\
\omega_n^{1/n} }  \Vert f\Vert_{\infty,p}.
\]

Now, $f^\circ$ is Lipschitz and $f^\ast$ locally Lipschitz
and $f^{\ast\prime}$ integrable (see for instance \cite{K} and \cite{Mo}). Therefore since
$$ f^{\ast\ast}(t)-f^{\ast}(t)=\frac1t\int_0^t(f^\ast(s)-f^\ast(t))ds=\frac 1t
\int_0^t\int_s^t- f^{\ast\prime}(u)duds\\
=\frac 1t \int_0^ts
|f^{\ast\prime}(s)|ds
$$ we have
$$
 \Vert f^\circ\Vert_{\infty, p}^p =\Vert f\Vert_{\infty, p}^p=\int_0^\infty
(f^{\ast\ast}(t)-f^{\ast}(t))^p\frac{dt}{t^{p/n}}
=\int_0^\infty\left(\frac 1t \int_0^ts
|f^{\ast\prime}(s)|ds
\right)^p\frac{dt}{t^{p/n}}.$$
Apply Hardy's inequality $\int_0^\infty\left(\frac 1t \int_0^tg(s)ds
\right)^p\frac{dt}{t^{p/n}}\le\left( \frac{p}{p+\frac{p}{n}-1}\right)^p\int_0^\infty g(s)^p 
\frac{ds}{s^{p/n}}$ to $g(s)=s |f^{\ast\prime}(s)|$ (see \cite{M2} for a reference on Hardy's inequalities with 
weights) to obtain
$$\Vert f^\circ\Vert_{\infty, p}^p\le \left(\frac{1}{1-\frac1q}\right)^p\int_0^\infty s^{\frac{(n-1)p}{n}}|f^{\ast\prime}(s)|^p\
ds.$$

On the other hand, by definition of $f^\circ(x)$,
$$
\langle \nabla f^\circ(x),u\rangle_+=n\omega_n|x|^{n-1}|f^{\ast\prime}(\omega_n
|x|^n)|\left\langle\frac{-x}{|x|},u\right \rangle_+
$$
 and
so by polar integration $x=r\theta$
 and the change of variables $s=\omega_n r^n$
\begin{equation}
 \label{fpolar}
 \Vert D_u^+f^\circ\Vert_p^p=\int_{\R^n}\langle\nabla
f^\circ(x),u\rangle_+^p\, dx=\frac12I_p^p
\Big(n\omega_n^{1/n}\Big)^p\int_0^\infty
s^{\frac{(n-1)p}{n}}|f^{\ast\prime}(s)|^pds
\end{equation}
 and  the result follows. 

In order to see that the constant is sharp consider truncations
of the function
$f^\ast(t)=t^{-1/q}$, whenever $p<n$, $f^\ast(t)=\log(1/t)$, for $p=n$ and
$f^\ast(t)=(1-t^{-1/q})\chi_{[0,1]}$, whenever $p>n$.
\end{proof}

Straightforward computations show that $f^{\ast}$ and therefore
$\Vert f\Vert_{\infty, p}$ are invariant under transformations of $\R^n$, $x\to x_0+Ax, x_0\in\R^n, A\in SL(n)$. Consequently, 
the inequality in Theorem \ref{relation} is affine-invariant. 

\begin{cor}\label{cor}
 Let $1\leq p<\infty$. For any  $f\in W^{1,p}(\R^n)$
\[\Vert \nabla f\Vert_p\ge
n\ \omega_n^{1/n} \left(1-\frac 1q\right)\Vert f\Vert_{\infty, p}.
\]
In  particular, for $p=n$ we have $\Vert \nabla f\Vert_n\ge n\ \omega_n^{1/n}
\Vert f\Vert_{\infty, n}$
\end{cor}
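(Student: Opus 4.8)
The plan is to obtain the corollary simply by concatenating Theorem \ref{relation} with the two elementary upper bounds for $\mathcal{E}_p^+(f)$ already recorded in the introduction, which cost nothing. First I would recall that, by Jensen's inequality together with Fubini's theorem, for every $f\in W^{1,p}(\R^n)$ one has
\[
\mathcal{E}_p(f)=\frac{1}{I_p}\left(\int_{S^{n-1}}\Vert D_uf\Vert_p^{-n}du\right)^{-1/n}\le \frac{1}{I_p}\left(\int_{S^{n-1}}\Vert D_uf\Vert_p^{p}du\right)^{1/p}=\Vert\nabla f\Vert_p,
\]
valid for all $1\le p<\infty$ with no restriction on the support of $f$. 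Next I would invoke the first inequality in (\ref{HS}), namely $\mathcal{E}_p(f)\ge\mathcal{E}_p^+(f)$, again valid for every $p\ge1$. Composing these two bounds gives $\Vert\nabla f\Vert_p\ge\mathcal{E}_p^+(f)$.

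Then I would apply Theorem \ref{relation} directly to the right-hand side, obtaining
\[
\Vert\nabla f\Vert_p\ge\mathcal{E}_p^+(f)\ge\left(1-\frac1q\right)n\,\omega_n^{1/n}\,\Vert f\Vert_{\infty,p},
\]
which is the asserted inequality. For the particular case $p=n$ I would simply note that then $\frac1q=\frac1p-\frac1n=0$, so that $1-\frac1q=1$ and the constant collapses to $n\,\omega_n^{1/n}$; this recovers, and makes the constant explicit, in the first inequality of (\ref{BMR}).

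There is essentially no obstacle here beyond bookkeeping: the only point that needs care is to quote both auxiliary inequalities, $\mathcal{E}_p(f)\le\Vert\nabla f\Vert_p$ and $\mathcal{E}_p(f)\ge\mathcal{E}_p^+(f)$, in the full range $1\le p<\infty$ rather than merely for $p<n$ — which is precisely the range in which they are stated in the introduction (the inequalities (\ref{HS}) and the Jensen--Fubini estimate do not use $q$ at all, hence are insensitive to whether $q$ is positive, infinite, or negative). Sharpness of the constant is then inherited verbatim from the sharpness part of Theorem \ref{relation}, via the truncated extremal profiles $f^\ast(t)=t^{-1/q}$ for $p<n$, $f^\ast(t)=\log(1/t)$ for $p=n$, and $f^\ast(t)=(1-t^{-1/q})\chi_{[0,1]}$ for $p>n$, since for those profiles the first two inequalities above are (asymptotically) equalities as well.
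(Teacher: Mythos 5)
Your proof is correct and is essentially the paper's own argument: both concatenate Theorem \ref{relation} with the chain $\Vert\nabla f\Vert_p\ge\mathcal{E}_p(f)\ge\mathcal{E}_p^+(f)$ coming from Jensen's inequality and (\ref{HS}), the only cosmetic difference being that the paper routes the chain through $\mathcal{E}_p^+(f^\circ)=\mathcal{E}_p(f^\circ)\le\mathcal{E}_p(f)$ rather than bounding $\mathcal{E}_p^+(f)$ directly. The specialization $1-\frac1q=1$ at $p=n$ is handled identically.
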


\begin{rmk} The result refines the inequalities (\ref{des}), (\ref{BMR}) and (\ref{hs}).
\end{rmk}

\begin{proof}
It follows from the previous Theorem and the facts stated in the introduction
$$
\left(1-\frac 1q\right)n\ \omega_n^{1/n}\Vert f\Vert_{\infty, p}\le \mathcal{E}^{+}_p(f^{\circ}) \le
\mathcal{E}_p(f^\circ) \le  \mathcal{E}_p(f)\le \Vert\nabla f\Vert_p$$
\end{proof}

We pass to the case $p>n$. As we said in the introduction we shall see how Theorem \ref{relation}
provides better estimates than inequality (\ref{hsp}).

\begin{proposition}\label{mejora} Let $p>n$,
$\displaystyle\frac{1}{q}=\frac{1}{p}-\frac{1}{n}$ and $f$ a compactly supported
$C^{1}$
function. Then,
\[\sup_{t>0}\{ \left( \Vert
f\Vert_\infty-f^\ast(t)\right)t^{1/q}\}
\leq \alpha_{n,p}\Vert
f\Vert_{\infty,p}
\]
for some $\alpha_{n,p}>0$ (independent of the support of $f$).
\end{proposition}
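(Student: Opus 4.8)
The plan is to bound the left‑hand side by a dyadic sum $\sum_i w(2^{-i}t)$ of $w:=f^{\ast\ast}-f^\ast$, and then to bound that sum by $\Vert f\Vert_{\infty,p}$ using that $t\mapsto t\,w(t)$ is non-decreasing. Throughout write $M=\Vert f\Vert_\infty=f^\ast(0^+)$ and $h(t)=M-f^\ast(t)$, and recall from the proof of Theorem \ref{relation} the identities
\[
w(t)=f^{\ast\ast}(t)-f^\ast(t)=\frac1t\int_0^t\bigl(f^\ast(u)-f^\ast(t)\bigr)\,du=\frac1t\int_0^t u\,|f^{\ast\prime}(u)|\,du,
\]
so that $\Vert f\Vert_{\infty,p}^p=\int_0^\infty w(t)^p\,t^{-p/n}\,dt$, while $h(t)=\int_0^t|f^{\ast\prime}(u)|\,du$ because $-f^{\ast\prime}=|f^{\ast\prime}|$. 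Note $1/q<0$ since $p>n$, so $t^{1/q}$ is decreasing; if $\Vert f\Vert_{\infty,p}=\infty$ there is nothing to prove, so one may treat all quantities below as elements of $[0,\infty]$.

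\emph{Step 1 (from $h$ to $w$).} First I would establish a pointwise bound: since $f^\ast$ is non-increasing, for every $s>0$
\[
f^\ast(s/2)-f^\ast(s)=\frac2s\int_0^{s/2}\bigl(f^\ast(s/2)-f^\ast(s)\bigr)\,du\le\frac2s\int_0^{s}\bigl(f^\ast(u)-f^\ast(s)\bigr)\,du=2\,w(s).
\]
Summing the telescoping series $h(t)=\sum_{j\ge1}\bigl(f^\ast(2^{-j}t)-f^\ast(2^{-j+1}t)\bigr)$ (non-negative terms, partial sums $f^\ast(2^{-N}t)-f^\ast(t)\to M-f^\ast(t)$) gives $h(t)\le2\sum_{i\ge0}w(2^{-i}t)$. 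Multiplying by $t^{1/q}=2^{i/q}(2^{-i}t)^{1/q}$, estimating $w(2^{-i}t)(2^{-i}t)^{1/q}\le\sup_{s>0}w(s)s^{1/q}$, and summing $\sum_{i\ge0}2^{i/q}=(1-2^{1/q})^{-1}$ (convergent because $2^{1/q}<1$) yields
\[
\sup_{t>0}h(t)\,t^{1/q}\le\frac{2}{1-2^{1/q}}\,\sup_{t>0}w(t)\,t^{1/q}.
\]

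\emph{Step 2 (from $w$ to $\Vert f\Vert_{\infty,p}$).} Here I would use that $W(t):=t\,w(t)=\int_0^t u\,|f^{\ast\prime}(u)|\,du$ is non-decreasing. For fixed $t_0>0$, $W(t)\ge W(t_0)$ when $t\ge t_0$, hence
\[
\Vert f\Vert_{\infty,p}^p=\int_0^\infty W(t)^p\,t^{-p-p/n}\,dt\ge W(t_0)^p\int_{t_0}^\infty t^{-p-p/n}\,dt=\frac{W(t_0)^p}{p-1+p/n}\,t_0^{1-p-p/n},
\]
the last integral being finite since $p+\frac pn>1$. Taking $p$-th roots and using the identity $\frac1p-1-\frac1n=\frac1q-1$ gives $\Vert f\Vert_{\infty,p}\ge(p-1+p/n)^{-1/p}\,w(t_0)\,t_0^{1/q}$; taking the supremum over $t_0>0$ and combining with Step 1 proves the Proposition with, for instance,
\[
\alpha_{n,p}=\frac{2}{1-2^{1/q}}\Bigl(p-1+\frac pn\Bigr)^{1/p}.
\]

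The main obstacle is Step 1: the functional $\Vert\cdot\Vert_{\infty,p}$ is built from the \emph{difference} $f^{\ast\ast}-f^\ast$, which is not monotone and is not controlled pointwise by $h=M-f^\ast$, so one cannot merely compare the two quantities; the dyadic telescoping together with the elementary estimate $f^\ast(s/2)-f^\ast(s)\le2w(s)$ is exactly what circumvents this. The constant obtained this way is not sharp — the dyadic splitting is lossy — but the statement only asks for the existence of an $\alpha_{n,p}$ independent of the support of $f$; to get the optimal constant one would instead localise near the extremal profile $f^\ast(t)=(1-t^{-1/q})\chi_{[0,1]}(t)$ appearing in Theorem \ref{relation}.
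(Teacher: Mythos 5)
Your proof is correct, and it takes a genuinely different route from the paper's. The paper starts from the exact identity
\[
\Vert f\Vert_\infty-f^\ast(t)=\int_0^t\frac{f^{\ast\ast}(u)-f^\ast(u)}{u}\,du+\bigl(f^{\ast\ast}(t)-f^\ast(t)\bigr),
\]
which follows from $-f^{\ast\ast\prime}(u)=(f^{\ast\ast}(u)-f^\ast(u))/u$, and then applies H\"older's inequality twice: once to this two-term sum, and once to the integral term against the weight $u^{(\frac1n-1)p'}$, whose integrability at $0$ is exactly where the hypothesis $p>n$ enters; the remaining term $f^{\ast\ast}(t)-f^\ast(t)$ is controlled by the same monotonicity of $t\mapsto t\,(f^{\ast\ast}(t)-f^\ast(t))$ that you use in Step 2, via the tail integral $\int_t^\infty$. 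Your argument replaces the first two ingredients by the dyadic telescoping of $\Vert f\Vert_\infty-f^\ast(t)$ combined with the elementary pointwise bound $f^\ast(s/2)-f^\ast(s)\le 2\,(f^{\ast\ast}(s)-f^\ast(s))$, with $p>n$ entering instead through the convergence of $\sum_i 2^{i/q}$. What this buys is a cleaner factorization: you in fact prove the intermediate weak-type comparison $\sup_t(\Vert f\Vert_\infty-f^\ast(t))t^{1/q}\le\frac{2}{1-2^{1/q}}\sup_t(f^{\ast\ast}(t)-f^\ast(t))t^{1/q}$, of independent interest, and then need only the weak consequence $\sup_t (f^{\ast\ast}(t)-f^\ast(t))t^{1/q}\lesssim\Vert f\Vert_{\infty,p}$ of membership in $\mathcal{A}_{\infty,p}$; moreover the telescoping uses only monotonicity of $f^\ast$ and $f^\ast(0^+)=\Vert f\Vert_\infty$, so it sidesteps any absolute-continuity considerations near $t=0$. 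The price is a lossier constant: the paper's splitting of $\Vert f\Vert_{\infty,p}^p$ into $\int_0^t+\int_t^\infty$ yields $\alpha_{p,n}=\bigl((p(1-\frac1q))^{p'/p}+\frac{|q|}{p'}\bigr)^{1/p'}$, which behaves better than your $\frac{2}{1-2^{1/q}}(p-1+\frac pn)^{1/p}$ as $p\downarrow n$ (growth $|q|^{1/p'}$ versus $|q|$), though neither is claimed to be sharp.
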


\begin{rmk} The proof gives
$\alpha_{p,n}=\left(\big(p (1-\frac1{q})\big)^{p'/p}+\frac{|q|}{p'}\right)^\frac{1}{p'}$.
\end{rmk}

\begin{proof}

For any $t>0$ we have
\begin{eqnarray*}
\Vert f\Vert_\infty\!\!\!\! &-&\!\!\!\!f^\ast(t)=f^{\ast\ast}(0)-f^{\ast\ast}(t)+f^{\ast\ast}(t)-f^\ast(t)=
\int_0^t\!-f^{\ast\ast\prime}(u)du+f^{\ast\ast}(t)-f^\ast(t)\cr
&=&\int_0^t\frac{f^{\ast\ast}(u)-f^\ast(u)}{u}du+f^{\ast\ast}(t)-f^\ast(t)\cr
&=&\int_0^t\frac{f^{\ast\ast}(u)-f^\ast(u)}{u}du\left(\frac{p'}{|q|}\right)^\frac{1}{p'}
\left(\frac{|q|}{p'}\right)^\frac{1}{p'}
+\frac{f^{\ast\ast}(t)-f^\ast(t)}{\left(p\big(1-\frac{1}{q}\big)\right)^\frac{1}{p}}
\left(p\big(1-\frac{1}{q}\big)\right)^\frac{1}{p}.
\end{eqnarray*}

By H\"{o}lder's inequality, the latter expression is bounded from above by
$$
\alpha_{p,n}\left(\left(\int_0^t\frac{f^{\ast\ast}(u)-f^\ast(u)}{u}du \right)^p\left(\frac{p'}{|q|}\right)^\frac{p}{p'} +\frac{\left(f^{\ast\ast}(t)-f^\ast(t)\right)^p}{p\big(1-\frac{1}{q}\big)}\right)^\frac{1}{p}.
$$

Now, on one hand, for any $s>t>0$ we have after integrating by parts
\begin{align*}
 f^{\ast\ast}(s)-f^\ast(s)&=\frac 1{s}\int_0^{s} u|f^{\ast\prime}(u)|du
\geq \frac 1{s}\int_0^t u|f^{\ast\prime}(u)|du=\frac{t}{s}
(f^{\ast\ast}(t)-f^\ast(t)).
\end{align*}

Hence
$$
\int_t^{\infty}(f^{\ast\ast}(s)-f^\ast(s))^ps^{-p/n}ds
\ge  t^{p}(f^{\ast\ast}(t)-f^\ast(t))^p\int_t^{\infty}\frac{ds}{s^{p+\frac{p}{n}}}
$$
and consequently
$$
\frac{\left(f^{\ast\ast}(t)-f^\ast(t)\right)^p}{p\big(1-\frac{1}{q}\big)}\leq t^{-\frac{p}{q}}\int_t^\infty(f^{\ast\ast}(s)-f^\ast(s))^ps^{-p/n}ds.
$$

On the other hand, by H\"{o}lder's inequality and since $p>n$,

$$
\int_0^t\frac{f^{\ast\ast}(s)-f^\ast(s)}{s}\le
\left(\int_0^t s^{(\frac1{n}-1)p'}ds\right)^{1/p'} \left(\int_0^t(f^{\ast\ast}(s)-f^\ast(s))^ps^{-p/n}ds\right)^{1/p}
$$

which implies
$$
\left(\int_0^t\frac{f^{\ast\ast}(s)-f^\ast(s)}{s}\right)^p\left(\frac{p'}{|q|}\right)^{\frac{p}{p'}}t^{\frac{p}{q}}\leq 
\int_0^t(f^{\ast\ast}(s)-f^\ast(s))^ps^{-p/n}ds.
$$

Thus, for any $t>0$ we have
$$t^\frac{1}{q}(\Vert f\Vert_\infty-f^\ast(t))\leq\alpha_{p,n}\Vert f\Vert_{\infty,p}$$
which finishes the proof.
\end{proof}

\begin{rmk}  $f^\ast(|{\text {supp}\,f}|_n)=0$ implies
$\displaystyle \Vert f\Vert_\infty |{\text {supp}\,
f}|_n^{1/q}\le\sup_{t>0}\{ \left( \Vert
f\Vert_\infty-f^\ast(t)\right)t^{1/q}\}$ and so Proposition \ref{mejora} shows that
Theorem \ref{relation} is, up to constant, better than (\ref{hsp}).  The example $f^\ast(t)=(1-t^{-1/q})\chi_{[0,1]}(t)$ verifies
$\displaystyle\sup_{t>0}\{ \left( \Vert f\Vert_\infty{-}f^\ast(t)\right)t^{1/q}\}=1$ while $\Vert
f\Vert_{\infty, p}=\infty$.
\end{rmk}

\section{A simplified approach to affine Sobolev inequalities}

In this part we will show a direct way to deduce the key inequalities
(\ref{LYZ}) and (\ref{HS}) from
Zhang's inequality, paying a penalty on the constant.
We use  similar ideas to those appearing in \cite{Mo} which prove
Polya-Szeg\"o rearrangement inequality from the isoperimetric inequality.

\begin{proposition} Let $1\leq p<\infty$. For all $f\in W^{1,p}(\R^n)$
\[
 \mathcal{E}_p(f^\circ)\leq \frac{I_p}{I_1}\mathcal{E}_p\,
(f)
\qquad {\rm and}\qquad
 \mathcal{E}_p^+(f^\circ)\leq \frac{I_p}{I_1}\,\mathcal{E}_p^+(f).
\]
\end{proposition}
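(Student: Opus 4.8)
The plan is to reduce the statement to Zhang's extension of the Petty projection inequality (\ref{Petty}) applied to the level sets of $f$, using the coarea formula to split the energy integral $\mathcal{E}_p(f)$ into a radial part and a part governed by the geometry of these level sets. First I would observe that both inequalities have the same structure, so I will carry out the argument for $\mathcal{E}_p^+$ and indicate the (simpler) modifications for $\mathcal{E}_p$. By density it suffices to treat $f\in C_{00}^1(\R^n)$, and by replacing $f$ with $|f|$ we may assume $f\ge 0$. Set $K_\lambda=\{x:f(x)>\lambda\}$; these are bounded open sets with $|K_\lambda|_n=\mu(\lambda)$, the distribution function, and $|K_\lambda|_n=\omega_n\rho(\lambda)^n$ identifies $K_\lambda^\circ$ as the centered ball of radius $\rho(\lambda)$.

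The key computation is to express the directional energy in terms of the level sets. Using the layer-cake / coarea formula, for each fixed $u\in S^{n-1}$ one writes $\Vert D_u^+ f\Vert_p^p=\int_{\R^n}\langle\nabla f,u\rangle_+^p\,dx$ and, via a standard one-dimensional slicing in the direction $u$, relates this to an integral over $\lambda$ of a quantity involving the perimeter-type data of $K_\lambda$ seen from direction $u$ — morally the projection $|P_{u^\perp}K_\lambda|_{n-1}$ weighted appropriately. The point of passing to $f^\circ$ is that its level sets are exactly the balls $K_\lambda^\circ$, for which all projections are equal, $|P_{u^\perp}K_\lambda^\circ|_{n-1}=\omega_{n-1}\rho(\lambda)^{n-1}$; this is precisely where the normalization constants $I_p$, $I_1$ and the ratio $I_p/I_1$ enter, through $\int_{S^{n-1}}|u_1|^p\,du$ versus $\int_{S^{n-1}}|u_1|\,du$. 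Then Zhang's inequality (\ref{Petty}), applied to each compact set $K_\lambda$, says that $\big(\int_{S^{n-1}}|P_{u^\perp}K_\lambda|_{n-1}^{-n}du\big)^{-1/n}$ is bounded below by the corresponding quantity for the ball $K_\lambda^\circ$, i.e. by $\omega_{n-1}\rho(\lambda)^{n-1}$.

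Assembling these pieces: after integrating over $\lambda$, one obtains $\big(\int_{S^{n-1}}\Vert D_u^+ f\Vert_p^{-n}du\big)^{-1/n}\ge c_{n,p}\big(\int_{S^{n-1}}\Vert D_u^+ f^\circ\Vert_p^{-n}du\big)^{-1/n}$ with the constant $c_{n,p}=I_1/I_p$ produced by comparing the averaging of $|u_1|^p$ to that of $|u_1|$ at the stage where the $p$-th power integral over the sphere is estimated below by the first power (this is where Hölder or Jensen on $S^{n-1}$ costs the factor $(I_p/I_1)^p$, hence $I_1/I_p$ after the $-n$-th root and the reciprocal). Rearranging the normalizing constants in the definitions of $\mathcal{E}_p^+(f)$ and $\mathcal{E}_p^+(f^\circ)$ gives $\mathcal{E}_p^+(f^\circ)\le (I_p/I_1)\mathcal{E}_p^+(f)$, and the same argument without the $+$ (using $\langle\nabla f,u\rangle$ in place of $\langle\nabla f,u\rangle_+$ and Zhang's inequality for $K_\lambda$ in its symmetric form) gives the first inequality. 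The main obstacle I anticipate is making the coarea/slicing step rigorous for merely $W^{1,p}$ functions — controlling the level sets $K_\lambda$ for a.e. $\lambda$, justifying that their projections behave well, and handling the critical values of $f$ — but since the authors explicitly model this on \cite{Mo}'s derivation of Polya–Szegő from the isoperimetric inequality, I expect the technical apparatus there (Sard-type arguments, approximation by $C^1$ functions) transfers essentially verbatim, with Zhang's projection inequality simply replacing the isoperimetric inequality as the geometric input.
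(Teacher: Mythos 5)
Your strategy coincides with the paper's: slice $f$ into its level sets via the coarea formula, apply the extended Petty projection inequality (\ref{Petty}) to each level set, and compare the outcome with the exact polar computation (\ref{fpolar}) for $f^\circ$. Your diagnosis of where the factor $I_p/I_1$ originates --- comparing the $p$-th moment of $|\langle \nu,u\rangle|$ with its first moment over a sphere --- is also essentially right. Two steps that carry the actual weight of the argument are, however, asserted rather than supplied, and neither is routine.

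First, for $p>1$ the quantity $\Vert D_u^+ f\Vert_p^p$ is not ``morally a weighted projection'' of the level sets in any directly usable sense: the coarea formula gives $\int_0^\infty\int_{\{|f|=t\}}\langle \tfrac{\nabla f}{|\nabla f|},u\rangle_+^p\,|\nabla f|^{p-1}\,dH_{n-1}\,dt$, and the weight $|\nabla f|^{p-1}$ has to be removed before any projection body appears. The paper does this by Jensen's inequality on each level surface with respect to the measure $d\mu=dH_{n-1}/|\nabla f|$, lowering the exponent from $p$ to $1$ at the price of the factor $\left(\int_{\{|f|=t\}}d\mu\right)^{1-p}=|M'(t)|^{1-p}$; only then does the first-order surface integral $\frac12\int_{\{|f|=t\}}|\langle\nu,u\rangle|\,dH_{n-1}=h(\Pi K_t,u)$ emerge, to which (\ref{Petty}) applies (via the Minkowski-existence body $K_t$ with surface area measure $\mu_t$ and Zhang's fact $M(t)\le|K_t|_n$; note that for nonconvex level sets the honest projection $|P_{u^\perp}K_\lambda|_{n-1}$ is only a lower bound for this surface integral, which fortunately is the right direction). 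This Jensen step is precisely where the constant is generated, and ``weighted appropriately'' does not substitute for it. Second, ``after integrating over $\lambda$, one obtains'' conceals the interchange of the $t$-integration with the functional $\left(\int_{S^{n-1}}(\cdot)^{-n}\,du\right)^{-p/n}$: Petty's inequality is applied at each fixed $t$, while the conclusion concerns the spherical quasi-norm of the $t$-integral. This interchange is Minkowski's integral inequality for the exponent $-n/p<0$ (where the inequality reverses, which is exactly the direction needed) and must be invoked explicitly; it is not a formal Fubini. With these two ingredients made explicit your outline becomes the paper's proof; without them the reduction to (\ref{Petty}) does not close.
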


\begin{rmk} It is well known that $C_1\sqrt p\le\frac{I_p}{I_1}\le C_2\sqrt p$  with $C_1, C_2$ absolute constants.

\end{rmk}

\begin{proof} Suppose $f$ is a ${\mathcal C}^1$ function of compact support. 
Let $\Phi(t)$ represent either $|t|$ or $\max\{t,0\}$. By Sard's theorem, for almost all $t>0$
the level set $\{|f|\ge t\}$ is compact with ${\mathcal C}^1$ boundary $\{|f|=t\}$ and 
$\nabla f(x)\ne 0, \forall x\in \{|f|=t\}$.
 By Federer's co-area formula 
\[\int_{\R^n}\Phi(\langle \nabla f(x), u\rangle)^pdx
=\int_0^\infty\left(\int_{\{|f|=t\}}\Phi(\langle \nabla f(x),
u\rangle)^pd\mu(x)\right)dt\]
 where, for almost every $t>0$, $
       d\mu(x)=\displaystyle\frac{dH_{n-1}(x)}{|\nabla f(x)|}
      $   being $dH_{n-1}(x)$ the corresponding Hausdorff measure on 
$\{|f|=t\}$. Next we use Jensen inequality
\begin{align*}
 \int_{\R^n}\!\!\!\Phi(\langle \nabla f(x), u\rangle)^pdx&\ge
\int_0^\infty\!\!\!\left(\int_{\{|f|=t\}}\!\!\!\!\!\!\Phi(\langle \nabla f(x),
u\rangle)\frac{d\mu(x)}{\int_{\{|f|=t\}}d\mu(x)} \right)^p
\left(\int_{\{|f|=t\}}\!\!\!\!\!\!\!d\mu(x)\right)dt\\
&= \int_0^\infty\left(\int_{\{|f|=t\}}\!\!\!\!\Phi(\langle \nabla f(x),
u\rangle) d\mu(x) \right)^p \left(\int_{\{|f|=t\}}\!\!\!\!d\mu(x)\right)^{1-p}dt.
\end{align*}
Denote $\displaystyle M(t)= \int_{\{|f|\ge t\}}\!\!\!\!dx$. For
 almost every $t>0$, another use of the co-area formula yields to
\[
 \int_{\{|f|=t\}}d\mu(x)=\left(-\int_t^\infty
\left(\int_{\{|f|=s\}}\frac{dH_{n-1}(x)}{|\nabla
f(x)|}\right)ds\right)^\prime=-M^\prime(t)=|M^\prime(t)|
\]

and so
\begin{align*}
& \left(\int_{S^{n-1}}\left(\int_{\R^n}\Phi(\langle \nabla f(x),
u\rangle)^pdx\right)^{-n/p}
du\right)^{-p/n}\\
&\geq
\left(\int_{S^{n-1}}\left(\int_0^\infty\left(\int_{\{|f|=t\}}\Phi(\langle \nabla
f(x),
u\rangle) d\mu(x) \right)^p |M^\prime(t)|^{1-p}dt
\right)^{-n/p}du\right)^{-p/n}\!\!\!.\end{align*}

Use Minkowski's integral inequality to bound the previous formula from below
\begin{align*}
&\geq \int_0^\infty \left(\int_{S^{n-1}}\left(\int_{\{|f|=t\}}\Phi(\langle
\nabla f(x),
u\rangle) d\mu(x) \right)^{-n}du
\right)^ {-p/n}|M^\prime(t)|^{1-p}dt\\
& =
\int_0^\infty \left(\int_{S^{n-1}}\left(\int_{\{|f|=t\}}\Phi(\langle \nu(x),
u\rangle)dH_{n-1}(x)\right)^{-n}du
\right)^ {-p/n}|M^\prime(t)|^{1-p}dt\end{align*}
where for a.e. $t>0$, $\nu(x)$ is the outer normal vector to $\{|f|=t\}$ (w.r.t. $\{|f|\ge t\}$) at the point $x$.

For every ``good'' $t>0$ from Sard's theorem, one can easily see, \cite{Z}, that the linear functional
 $g\in\mathcal{C}(S^{n-1})\to \int_{\{|f|=t\}} g(\nu(x))dH_{n-1}(x)$ can be represented by a 
finite measure $d\mu_t$ on $S^{n-1}$. That is 
\[\int_{S^{n-1}}g(v)d\mu_t(v)=\int_{\{|f|=t\}} g(\nu(x))dH_{n-1}(x)\hskip 1truecm
\forall\, g\in\mathcal{C}(S^{n-1}).\]

Recall (see for instance \cite{S}) that every convex body $K\subset\R^n$ determines a 
{\sl surface area measure} on $S^{n-1}$ denoted by $S_K$.

It is also proved in \cite{Z} that, by Minkowski existence theorem (see \cite{S}), there exists a unique up to
translations convex body $K_t$ in $\R^n$ whose surface area measure $S_{K_t}$ is $\mu_t$. 
For this reason $\mu_t$ is also called the surface area measure of (the compact set) $\{|f|\ge t\}$. 

Let $\Pi K_t$ be the projection body associated to $K_t$, i.e. the convex body defined by its
support function as
\[
 h(\Pi K_t,u)=\frac12\int_{S^{n-1}}|\langle v,
u\rangle|dS_{K_t}(v)=|P_{u^\perp}(K_t)|_{n-1}, \qquad u\in S^{n-1}.
\]
 Let $\Pi^\ast
K_t$ be the polar projection body of $K_t$. Its volume is 
\begin{align*} 
 2^{-n}|\Pi^\ast K_t|_n & =\int_{S^{n-1}}h(\Pi K_t,u)^{-n}du=
\int_{S^{n-1}}\left(\int_{S^{n-1}}|\langle
u,
v\rangle|dS_{K_t}(v)\right)^{-n}du\\ &= 
\int_{S^{n-1}}\left(\int_{\{|f|=t\}}|\langle
\nu(x),
u\rangle|dH_{n-1}(x)\right)^{-n}du.
\end{align*}

Finally, Petty's projection inequality (\ref{Petty}) and the fact proved in \cite{Z}, $M(t)=|\{|f|\geq t\}|_n\leq  |K_t|_n$, 
show that for almost all $t>0$
\begin{align*}
 \left(\int_{S^{n-1}}\left(\int_{\{|f|=t\}}|\langle \nu(x),
u\rangle|dH_{n-1}(x) \right)^{-n}du\right)^ {-1/n}\geq
2\frac{\omega_{n-1}}{\omega_n^{1-1/n}}
 M(t)^{\frac{(n-1)}{n}}.
\end{align*}

Consider the case $\Phi(t)=|t|$.  $M$ and $f^{\ast}$ are differentiable (except, possibly, on some set $N$ of zero measure) 
and $M=(f^{\ast})^{-1}$ on the set $(f^{\ast})^{-1}\big((0,\infty)\setminus N\big)$, therefore
\begin{align*}
\left(\int_{S^{n-1}}\Vert D_uf\Vert_p^{-n}du\right)^{-p/n} &\geq
2^{p}\left(\frac{\omega_{n-1}}{\omega_n^{1-1/n}}\right)^p
\int_0^\infty M(t)^{\frac{(n-1)p}{n}}|M^\prime(t)|^{1-p}dt\\
&=\left(\frac{2\omega_{n-1}}{\omega_n^{1-1/n}}\right)^p\int_0^\infty
s^{\frac{p(n-1)}{n}}|f^{\ast\prime}(s)|^pds.
\end{align*}

In the other case, $\Phi(t)=\max\{ t,0\}$, since $\int_{S^{n-1}}\langle
u,v\rangle d\mu_t(v)=0$ we have
$$ h(\Pi K_t,u)=\frac12\int_{S^{n-1}}|\langle v,
u\rangle|d\mu_t(v)
=\int_{S^{n-1}}\langle v,
u\rangle_+d\mu_t(v).
$$
Hence
\begin{align*}
\left(\int_{S^{n-1}}\Vert D_u^+f\Vert_p^{-n}du\right)^{-p/n} &\geq
\left(\frac{\omega_{n-1}}{\omega_n^{1-1/n}}\right)^p\int_0^\infty
s^{\frac{p(n-1)}{n}}|f^{\ast\prime}(s)|^pds.
\end{align*}

Finally we recall (\ref{fpolar}) and analogously
\[
 \left(\int_{S^{n-1}}\Vert D_uf^\circ\Vert_p^{-n}du\right)^{-p/n}=
\Big(n\omega_n^{1/n}\Big)^p\left(\int_0^\infty
s^{\frac{(n-1)p}{n}}|f^{\ast\prime}(s)|^pds
\right)I_p^p.
\]
Therefore
\[\mathcal{E}_p(f^\circ)\leq \frac{n\omega_n}{2\omega_{n-1}}
I_p\mathcal{E}_p(f)=\frac{I_p}{I_1}\mathcal{E}_p(f)\quad \text{and}\quad \mathcal{E}_p^+(f^\circ)\leq \frac{n\omega_n}{2\omega_{n-1}}
I_p\mathcal{E}_p^+(f)=\frac{I_p}{I_1}\mathcal{E}_p^+(f).\]

\end{proof}

{\bf Acknowledgements.} We thank the referee for useful 
comments that helped to improve the presentation of the manuscript.

\end{document}